\newtheorem{thm}{Theorem}[section]
\newtheorem{cor}[thm]{Corollary}
\newtheorem{prop}[thm]{Proposition}
\theoremstyle{definition}
\newtheorem{defn}[thm]{Definition}
\theoremstyle{remark}
\newtheorem{rmk}[thm]{Remark}
\newtheorem{exam}[thm]{Example}
\def\co{\colon\thinspace}
\newcommand{\mb}[1]{\mathbb{#1}}
\newcommand{\Hom}{\ensuremath{{\rm Hom}}}
\newcommand{\overto}{\mathop\rightarrow}
\newcommand{\GL}{{\rm GL}}
\newcommand{\Spec}{{\rm Spec}}
\newcommand{\Spf}{{\rm Spf}}
\newcommand{\Wt}{{\mb W}}
\newcommand{\comp}[1]{\ensuremath{#1^\wedge}}
\newcommand{\pow}[1]{\left\llbracket{#1}\right\rrbracket}
\title{Structured ring spectra and displays}
\author{Tyler Lawson}
\begin{document}
\maketitle

\begin{abstract}
We combine Lurie's generalization of the Hopkins-Miller theorem with work of Zink-Lau on displays to give a functorial construction of even-periodic $E_\infty$ ring spectra $E$, concentrated in chromatic layers $2$ and above, associated to certain $n \times n$ invertible matrices with coefficients in the Witt ring of $\pi_0(E)$.  This is applied to examples related to Lubin-Tate and Johnson-Wilson spectra.  We also give a Hopf algebroid presentation of the moduli of $p$-divisible groups of height greater than or equal to $2$.
\end{abstract}

\section{Introduction}

One of the most successful methods for understanding stable homotopy theory is its connection to formal groups.  By work of Quillen, a homotopy commutative and associative ring spectrum $R$ has $MU$-homology $MU_* R$ an algebra over the the Lazard ring $L$, which carries a universal $1$-dimensional formal group law, and the $MU$-homology cooperations precisely provide $MU_* R$ with rules for change-of-coordinates on the formal group law.

In recent years much study has been devoted to the study of the converse problem.  Given a ring $R$ with a formal group on $R$, can we reconstruct a ring spectrum $E$ whose associated formal group lifts that on $R$?  In addition, can more rigid structure (such as the structure of an $E_\infty$ algebra) be imposed on $E$?  Can these constructions be made functorial?

In 2005 Lurie announced a theorem that lifts formal groups to $E_\infty$ ring structures, generalizing the Goerss-Hopkins-Miller theorem \cite{goersshopkins}.  The application of this theorem requires extra data: an extension of the formal group to a $p$-divisible group.  In addition, the $p$-divisible group on $R$ is required to satisfy a universality condition at each point of $\Spec(R)$.  This specifically can be applied to produce  elliptic cohomology theories and the theory of topological modular forms, and served as the basis for previous joint work with Behrens generalizing topological modular forms to moduli of higher-dimensional abelian varieties that reach higher chromatic levels \cite{taf}.

From a geometric point of view these are some of the most natural families of $1$-dimensional $p$-divisible groups, as the study of $p$-divisible groups originated with their connection to abelian varieties.  However, one of the major obstructions to understanding the associated cohomology theories is that such an understanding rests on an understanding of certain moduli of higher-dimensional abelian varieties, and in particular their global geometry.  This presents barriers both because of the necessary background and because these moduli seem to be intrinsically difficult.  Moreover, from the point of view of homotopy theory one does not have as many of the ``designer'' tools of the subject \cite{mikeRW}, which construct spectra with the explicit goal of capturing certain homotopy-theoretic phenomena.

The aim of this paper is to provide a method for constructing $E_\infty$ ring spectra from purely algebraic data.  Specifically, theorem~\ref{thm:main} allows the functorial construction of even-periodic $E_\infty$ ring spectra with $\pi_0 = R$ from the data of certain $n \times n$ invertible matrices with coefficients in the Witt ring of $R$.  This is obtained using Zink's {\em displays} on $R$ \cite{zink}, which correspond to certain $p$-divisible groups over $\Spec(R)$.  This generalizes the Dieudonn\'e correspondence between $p$-divisible groups over a perfect field and their associated Dieudonn\'e modules, and in particular restricts to this structure at any residue field $k$.  Due to restrictions on the $p$-divisible groups constructible by these displays, the associated spectra are concentrated in chromatic height greater than or equal to $2$.

The layout of this paper is as follows.  In section~\ref{sec:defs} we recall the definitions of displays and nilpotent displays over a ring $R$ from \cite{zink}, specifically concentrating on those in matrix form.  We state the equivalence of categories between nilpotent displays and formal $p$-divisible groups on $\Spec(R)$ due to Zink and Lau.  In section~\ref{sec:moduli} we apply Serre duality to obtain a classification of $p$-divisible groups of dimension $1$ and formal height $\geq 2$ on $\Spec(R)$, and give a presentation of the moduli of such $p$-divisible groups by a large Hopf algebroid.  In section~\ref{sec:deform} we study the deformation theory of nilpotent displays in matrix form over a ring $R$ and use this to give a criterion for these to satisfy the universal deformation criterion.  Specifically, a display in matrix form determines a map from $\Spec(R)$ to projective space $\mb P^{h-1}$ that is \'etale if and only if the associated $p$-divisible group is locally a universal deformation.  In section~\ref{sec:spectra} we recall the statement of Lurie's theorem and apply it to functorially obtain even-periodic ring spectra associated to certain displays.  This is applied to construct ``almost-global'' objects which are related to Lubin-Tate and Johnson-Wilson spectra.  Finally, in section~\ref{sec:period} we relate this to the work of Gross and Hopkins on the rigid analytic period map \cite{grosshopkins}.  In the specific case of a Lubin-Tate formal group over $R = \Wt(k)\pow{u_i}$, there is a choice of coordinates such that the above map $\Spec(R) \to \mb P^{h-1}$ induces a rigid-analytic map that agrees with the Gross-Hopkins period map modulo the ideal $(u_i)^p$.

We mention that Zink's theory of Dieudonn\'e displays provides objects equivalent to general $p$-divisible groups over certain complete local rings.  These could be applied to produce spectra associated to universal deformations of $p$-divisible groups of dimension 1 over a field, analogous to Lubin-Tate spectra, which are worth study in their own right.  However, our goal in this paper is to allow more global rather than local constructions.

{\bf Notation}.  For a ring $R$, we write $\Wt(R)$ for the ring of $p$-typical Witt vectors over $R$.  This carries Frobenius and Verschiebung maps $f, v\co \Wt(R) \to \Wt(R)$, the Teichm\"uller lift $[-]\co R \to \Wt(R)$, and ghost maps $w_k\co \Wt(R) \to R$.  We write $I_R$ for the ideal of definition $v (\Wt(R))$.

\section{Displays}
\label{sec:defs}

We will first briefly recall the classical Dieudonn\'e correspondence.  Let $k$ be a perfect field. The Dieudonn\'e module functor $\mb D$ is a contravariant equivalence of categories between $p$-divisible groups over $\Spec(k)$ and finitely generated free modules $M$ over the Witt ring $\Wt(k)$ that are equipped with operators $F$ and $V$ satisfying the following properties.
\begin{itemize}
\item $F$ is semilinear: for $x \in \Wt(k)$, $m \in M$, $F(xm) = f(x) F(m)$.
\item $V$ is anti-semilinear: for $x \in \Wt(k)$, $m \in M$, $x V(m) = V(f(x) m)$.
\item $FV = VF = p$.
\end{itemize}

The inverse equivalence can be made explicit.  The group schemes $\Spec(W_n)$ representing Witt vectors assemble into an inductive system using the Verschiebung maps, and the colimit (the ``unipotent'' Witt covectors) is a formal group scheme with self-maps $F$ and $V$ and an action of the Witt vectors $\Wt(k)$.  For $p$-divisible groups which accept no nontrivial maps from the multiplicative group scheme $\mb G_m$, $\mb D(\mb G)$ is the set of maps from $\mb G$ to this direct limit.  The generators of the Dieudonn\'e module provide an embedding of $\mb G$ into a product of copies of the Witt covectors.  The theory for general $\mb G$ is harder, and the unipotent Witt covectors must be replaced by a suitable completion viewed as a sheaf on commutative $k$-algebras \cite{fontaine}. 

Classical Dieudonn\'e theory also incorporates duality.  Each $p$-divisible group $\mb G$ has a Serre dual $\mb G^\vee$, whose associated Dieudonn\'e module $\mb D(\mb G^\vee)$ is isomorphic to the dual module
\[
\mb D(\mb G)^t = \Hom_{\Wt(k)}(\mb D(\mb G), \Wt(k))
\]
equipped with Frobenius operator $V^t$ and Verschiebung operator $F^t$.  The {\em covariant} Dieudonn\'e module of $\mb G$ is the Dieudonn\'e module of $\mb G^\vee$, and this provides a covariant equivalence of categories between Dieudonn\'e modules and $p$-divisible groups.

Zink's theory of displays is a generalization of the Dieudonn\'e correspondence.  However, over a non-perfect ring $R$ defining the map $V$ is no longer sufficient.  This is instead replaced by a choice of ``image'' of $V$, together with an inverse function $V^{-1}$.

\begin{defn}[\cite{zink}, Definition 1]
A {\em display} over a ring $R$ consists of a tuple $(P,Q,F,V^{-1})$, where $P$ is a finitely generated locally free $\Wt(R)$-module, $Q$ is a submodule of $P$, and $F\co P \to P$ and $V^{-1}\co Q \to P$ are  Frobenius-semilinear maps.  These are required to satisfy the following:
\begin{itemize}
\item $I_R P \subset Q \subset P$,
\item the map $P/I_RP \to P/Q$ splits,
\item $P$ is generated as a $\Wt(R)$-module by the image of $V^{-1}$, and
\item $V^{-1}(v(x) y) = x F(y)$ for all $x \in \Wt(R)$, $y \in P$.
\end{itemize}
\end{defn}

If $p$ is nilpotent in $R$, then $\Wt(R) \to R$ is a nilpotent thickening, and $P$ being locally free on $\Wt(R)$ implies this is also true locally on $\Spec(R)$.  In addition, $P/Q$ and $Q/I_R P$ are locally free $R$-modules.

Therefore, locally on such $R$ we may choose a basis $\{e_i | 1 \leq i \leq h\}$ of $P$ such that $Q = I_R P + \langle e_{d+1}, \cdots,  e_h\rangle$.  We refer to $h$ as the height and $d$ as the dimension of the display, and these are locally constant on $\Spec(R)$.  As in \cite{zink}, in such a basis we may define an $h \times h$ matrix $(b_{ij})$ as follows:
\begin{eqnarray*}
F e_j &=& \sum b_{ij} e_i \text{ for }1\leq j \leq d,\\
V^{-1} e_j &=& \sum b_{ij} e_i \text{ for }(d+1)\leq j \leq h.
\end{eqnarray*}
These determine all values of $F$ and $V^{-1}$:
\begin{eqnarray*}
  F e_j = V^{-1}(v(1) \cdot e_j) &=& \sum (p b_{ij}) e_i\text{ for }(d+1) \leq j \leq h,\\
  V^{-1}(v(x) \cdot e_j) &=& \sum (x b_{ij}) e_i\text{ for }1 \leq j \leq d, x \in \Wt(R).
\end{eqnarray*}
(We remark that the reason this last statement includes an indeterminate $x$ is that $Q$ is often not free as a $\Wt(R)$-module.)

The image of $V^{-1}$ generates all of $P$ if and only if this matrix is invertible, or equivalently that its image under the projection $M_h \Wt(R) \to M_h R$ is invertible.

In block form, we may write $(b_{ij})$ as
\[
\left[\begin{array}{c|c}
  u_1 & u_2
\end{array}\right],
\]
and find that in this basis the functions $F$ and $V^{-1}$ are given by block multiplication as follows:
\begin{eqnarray*}
F
\left[\begin{array}{c}
  x \\\hline y
\end{array}\right] = 
\left[\begin{array}{c|c}
  u_1 & p u_2
\end{array}\right]
\left[\begin{array}{c}
  fx \\\hline fy
\end{array}\right],
\\
V^{-1}
\left[\begin{array}{c}
  v x \\\hline y
\end{array}\right] = 
\left[\begin{array}{c|c}
  u_1 & u_2
\end{array}\right]
\left[\begin{array}{c}
  x \\\hline fy
\end{array}\right].
\end{eqnarray*}

To aid calculation in section~\ref{sec:deform} and further, we will refer to the {\em inverse} matrix $B = (b_{ij})^{-1}$ as a matrix form for the given display.  Specifying the matrix form is equivalent to specifying the inverse matrix.

If $R$ is a perfect field of characteristic $p$, the operator $V^{-1}$ has a genuine inverse defining an anti-semilinear map $V\co P \to Q \subset P$, and the maps satisfy $VF = FV = p$. Under these circumstances, if
\[
B^{-1} = \left[\begin{array}{c|c}
  u_1 & u_2
\end{array}\right],
\ B = \left[\begin{array}{c}
  w_1 \\ \hline w_2
\end{array}\right]
\]
are block forms, then the operators reduce to the classical operators $F$ and $V$ on a Dieudonn\'e module which is free over $\Wt(k)$ with basis $\{e_i\}$, and these operators have the matrix expression
\[
F 
\left[\begin{array}{c}
  x \\\hline y
\end{array}\right]
= 
\left[\begin{array}{c|c}
  u_1 & p u_2
\end{array}\right]
\left[\begin{array}{c}
  fx \\\hline fy
\end{array}\right],
\ V x
 = \left[\begin{array}{c}
v w_1 \\ \hline f^{-1} w_2
\end{array}\right]
(f^{-1} x).
\]

A map between two displays is a $\Wt(R)$-module map $P \to P'$ preserving submodules and commuting with $F$ and $V^{-1}$.  If $P$ and $P'$ have bases $\{e_i\}$ and $\{e_i'\}$ as above, a map $\phi\co P \to P'$ takes $Q$ to $Q'$ if and only if, when we write $\phi(e_j) = \sum \phi_{ij} e_i'$, we have $\phi_{ij} \equiv 0$ in $\Wt(R)/I_R$ when $i \geq (d+1)$, $j \leq d$.  Given an isomorphism $\phi$ from $(P,Q,F, V^{-1})$ to $(P', Q', F', (V')^{-1})$, the operators $F'$ and $(V')^{-1}$ are determined uniquely by $F' = \phi F \phi^{-1}$ and similarly for $V'$.  If these displays have matrix forms $B$ and $B'$ respectively, and we write $\phi$ in the block form
\[
\phi = 
\left[\begin{array}{c|c}
  a & vb \\
\hline
  c & d
\end{array}\right],
\]
then we find by direct calculation that the matrix form for the display $(P',Q',\phi F \phi^{-1}, \phi V^{-1} \phi^{-1})$ is given by the change-of-coordinates formula
\begin{equation}
  \label{eq:changeofbasis}
B' = 
\left[\begin{array}{c|c}
  fa & b \\
\hline
  p \cdot fc & fd
\end{array}\right]
\cdot B \cdot
\left[\begin{array}{c|c}
  a & vb \\
\hline
  c & d
\end{array}\right]^{-1}.
\end{equation}
The associated map of displays induces the map of modules $Q/I_RP \to Q'/I_RP'$ given in matrix form by $w_0(d)$.

Given a matrix form $B$, let $\overline B$ be the $(h-d) \times (h-d)$ matrix in $R/(p)$ which is the image of lower-right corner of $B$ under the projection $\Wt(R) \to R/(p)$.  We say that the display is {\em nilpotent} if the product
\[
f^n \overline B \cdots f^2 \overline B \cdot f \overline B \cdot \overline B
\]
is zero for some $n \geq 0$.  (This is independent of the choice of basis, as it is equivalent to the semilinear Frobenius map acting nilpotently on the quotient of $Q$ by $(p) + I_R P$.)  Here $f$ is the Frobenius on $R/(p)$ applied to each entry of the matrix.  A general display over $R$ is nilpotent if it is locally nilpotent in the Zariski topology.  We refer to a display on a formal $\mb Z_p$-algebra $R = \lim R_i$ as nilpotent if its restrictions to the $R_i$ are nilpotent.

\begin{thm}[\cite{zink}, \cite{lau}]
\label{thm:zinkcovariant}
If $R$ is a formal $\mb Z_p$-algebra, there is a (covariant) equivalence of categories between nilpotent displays over $R$ and formal $p$-divisible groups on $\Spec(R)$.

Under this correspondence, the Lie algebra of the $p$-divisible group associated to a display $(P,Q,F,V^{-1})$ is the locally free $R$-module $P/Q$.
\end{thm}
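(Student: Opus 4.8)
This equivalence is the main theorem of Zink \cite{zink}, extended and reproved by Lau \cite{lau}, so I would present only the structural skeleton and defer the technical core to those sources. The plan is threefold: construct the functor from nilpotent displays to formal $p$-divisible groups explicitly, read off the Lie algebra by a first-order computation, and then deduce that the functor is an equivalence by comparing it with theories that are already understood --- classical Dieudonn\'e theory over perfect residue fields and crystalline Dieudonn\'e theory over the general base.

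First I would construct the functor. Given a display $(P,Q,F,V^{-1})$ over $R$, I would define a functor on nilpotent $R$-algebras $\mathcal N$ by passing to Witt vectors supported on nilpotent elements, $\hat{\Wt}(\mathcal N)$, extending $P$ and $Q$ along $\Wt(R) \to \hat{\Wt}(\mathcal N)$, and taking the cokernel of the operator $V^{-1} - \mathrm{id}$ from the extension of $Q$ to the extension of $P$. The relations defining a display --- the semilinearity $V^{-1}(v(x)y) = xF(y)$ and the generation of $P$ by the image of $V^{-1}$ --- are exactly what make this cokernel a well-behaved abelian-group-valued functor, and one checks directly that it is a formal group. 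The nilpotence hypothesis, namely that the semilinear Frobenius acts nilpotently on $Q/((p) + I_R P)$, is then what forces this formal group to be $p$-divisible, of height $h = \mathrm{rank}\, P$ and dimension $d$.

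The Lie algebra statement I would obtain by evaluating this functor on the square-zero extension $R \oplus \epsilon R$. On such an algebra the Witt-vector construction truncates and the operator $V^{-1} - \mathrm{id}$ degenerates to its linear term; tracking the filtration $I_R P \subset Q \subset P$ through the resulting cokernel collapses it to the first-order piece $(P/Q) \otimes_R \epsilon R$. Hence the tangent space at the identity, and therefore the Lie algebra, is the locally free rank-$d$ module $P/Q$, as claimed.

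The hard part is the equivalence itself, and here I would lean entirely on the crystalline comparison. For full faithfulness one matches the two descriptions of morphisms: a map of displays is a $\Wt(R)$-linear map preserving $Q$ and commuting with $F$ and $V^{-1}$, while a map of formal $p$-divisible groups is determined by its induced map of Dieudonn\'e crystals respecting the Hodge filtrations; the deformation theories align, with Grothendieck--Messing lifting of the Hodge filtration on the geometric side corresponding to lifts of $Q$ on the display side. For essential surjectivity I would reduce, by base change and this deformation theory, to a perfect residue field, where $V^{-1}$ acquires a genuine inverse $V$ and the construction recovers the classical Dieudonn\'e module recorded above; Lau's identification of the display functor with the crystalline Dieudonn\'e functor then propagates the known equivalence over perfect fields to an arbitrary formal $\mb Z_p$-algebra. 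The principal obstacle throughout is exactly the failure of $V$ to be invertible over a non-perfect base --- which is why displays, carrying only $V^{-1}$ defined on $Q$, are the correct replacement for Dieudonn\'e modules --- and the delicate base-change and crystalline arguments controlling this are the technical heart supplied by \cite{zink} and \cite{lau}.
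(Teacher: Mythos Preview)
The paper does not prove this theorem at all: it is stated with the attribution ``[\cite{zink}, \cite{lau}]'' and no proof environment follows. It is quoted as an input from the literature, and the paper immediately moves on to use it. So there is nothing in the paper to compare your argument against.

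That said, your sketch is a reasonable outline of the Zink--Lau argument and matches the narrative the paper gives in the surrounding discussion: the functor is built from a cokernel of $V^{-1}-\mathrm{id}$ on Witt-vector extensions, the Lie algebra is read off as $P/Q$, and the equivalence is established by comparison with classical Dieudonn\'e theory at perfect residue fields together with crystalline/deformation arguments. One small point: your description of the functor (``cokernel of $V^{-1}-\mathrm{id}$'') is morally right but compresses some nontrivial exactness and well-definedness issues that Zink handles via the triple $(\hat{\Wt}(\mathcal N)\otimes P,\ \hat{\Wt}(\mathcal N)\otimes Q,\ V^{-1})$ and an explicit analysis of the resulting complex; if you were to expand this into an actual proof you would need to be more careful there. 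But as a pointer to the cited sources, which is all the paper itself provides, your proposal is appropriate.
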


\section{Moduli of $1$-dimensional $p$-divisible groups}
\label{sec:moduli}

Beginning in this section we specialize to the case of topological interest: the theory of $1$-dimensional $p$-divisible groups.  Unfortunately, there are very few $1$-dimensional $p$-divisible groups to which Theorem~\ref{thm:zinkcovariant} applies.  The only ones satisfying the conditions of Lurie's theorem (see \ref{thm:lurie}) are analogues of the Lubin-Tate formal groups.

However, the category of $p$-divisible groups has a notion of duality, compatible with a corresponding duality on the display.  Serre duality is a contravariant self-equivalence of the category of $p$-divisible groups over a general base $X$ that associates to a $p$-divisible group $\mb G$ of constant height $h$ and dimension $d$ a new $p$-divisible group $\mb G^\vee = \Hom(\mb G, \mb G_m)$ of height $h$ and dimension $h-d$.  This equivalence takes formal $p$-divisible groups to $p$-divisible groups with no subobjects of height $1$ and dimension $1$.  There is a compatible notion of duality for displays \cite[1.13, 1.14]{zink}, sending a display $(P,Q,F,V^{-1})$ to a new display $(P^t, Q^t, F^t, V^{-t})$ where $P^t = \Hom(P,\Wt(R))$ and $Q^t$ is the submodule of maps sending $Q$ into $I_R$.  The operators $F^t$ and $V^{-t}$ are determined by the formula $v((V^{-t} f)(V^{-1} x)) = f(x)$ for $f \in Q^t$, $x \in Q$.

Composing this duality equivalence with Theorem~\ref{thm:zinkcovariant}, we find the following.

\begin{cor}
\label{thm:zinkcontravariant}
If $R$ is a formal $\mb Z_p$-algebra, there is a contravariant equivalence of categories between nilpotent displays of height $h$ and dimension $(h-1)$ over $R$ and $p$-divisible groups of dimension $1$ and formal height $\geq 2$ on $\Spec(R)$.

Under this correspondence, the Lie algebra of the $p$-divisible group associated to a display $(P,Q,F,V^{-1})$ is the locally free $R$-module $\Hom_R(Q/I_RP,R)$, and the module of invariant $1$-forms is isomorphic to $Q/I_RP$.
\end{cor}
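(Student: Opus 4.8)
The plan is to realize the asserted equivalence as the composite of the covariant equivalence of Theorem~\ref{thm:zinkcovariant} with Serre duality, and then to extract the two module identifications from the duality of displays. First I would send a nilpotent display $D = (P,Q,F,V^{-1})$ of height $h$ and dimension $h-1$ to the formal $p$-divisible group $\mb G$ of Theorem~\ref{thm:zinkcovariant}, and then to its Serre dual $\mb G^\vee$. Since Serre duality is a contravariant self-equivalence and Theorem~\ref{thm:zinkcovariant} is a covariant equivalence, the composite is automatically a contravariant equivalence onto its essential image; a quasi-inverse is obtained by taking a $1$-dimensional group $\mb H$, forming $\mb H^\vee$, and applying the inverse of Theorem~\ref{thm:zinkcovariant}, with biduality $\mb G^{\vee\vee}\cong\mb G$ guaranteeing that this is well defined. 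Tracking heights and dimensions, $\mb G$ has height $h$ and dimension $h-1$, so $\mb G^\vee$ has height $h$ and dimension $1$, as required.

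Next I would match this essential image with the stated class of objects. By the quoted property of Serre duality, the dual of a formal group has no subobject of height $1$ and dimension $1$; conversely, such a subobject of a $p$-divisible group is a copy of $\mb G_m[p^\infty]$, and for a $1$-dimensional group the presence of one is exactly the statement that its connected part has height $1$. Thus, for $1$-dimensional groups, ``no subobject of height $1$ and dimension $1$'' agrees fibrewise with ``formal height $\geq 2$'', and biduality upgrades this to an equality of essential images: $\mb H^\vee$ is formal precisely when $\mb H$ has no $\mb G_m[p^\infty]$-subgroup. I expect this object-level bookkeeping to be the main obstacle, precisely because formal height is not locally constant on $\Spec(R)$, so one must verify that the condition holds over all of $\Spec(R)$ and not merely at each point; the clean global formulation, which I would take as the definition of the class, is the absence of a multiplicative ($\mb G_m[p^\infty]$-type) subgroup.

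Finally I would compute the two modules. Using the compatibility of the duality of displays with Serre duality, so that $\mb G^\vee$ is presented by $D^t = (P^t,Q^t,F^t,V^{-t})$, restriction of functionals followed by reduction modulo $I_R$ gives a $\Wt(R)$-linear map $P^t \to \Hom_R(Q/I_R P, R)$ whose kernel is by definition $Q^t$. This map is surjective: the splitting of $P/I_R P \to P/Q$ from the display axioms exhibits $Q/I_R P$ as a direct summand of the locally free module $P/I_R P$, so a functional on $Q/I_R P$ extends to $P/I_R P$, and projectivity of $P$ over $\Wt(R)$ lets the resulting functional $P \to R$ lift along $\Wt(R)\to R$. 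Hence $P^t/Q^t \cong \Hom_R(Q/I_R P, R)$, which is $\Lie(\mb G^\vee)$; the module of invariant $1$-forms is then its dual, $\Hom_R(\Hom_R(Q/I_R P,R),R)\cong Q/I_R P$ by local freeness. I would present the conceptual version as the robust justification, since it also circumvents the fact that $D^t$ need not be nilpotent: the Hodge filtration of the Dieudonn\'e crystal of $\mb G$ gives $0 \to Q/I_R P \to P/I_R P \to P/Q \to 0$, identifying $Q/I_R P$ with $\omega_{\mb G^\vee}$ and $P/Q$ with $\Lie(\mb G)$, and thereby yielding both assertions at once.
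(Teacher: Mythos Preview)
Your approach is exactly the paper's: the corollary is deduced by composing Theorem~\ref{thm:zinkcovariant} with Serre duality, and the paper's entire proof is the one sentence ``Composing this duality equivalence with Theorem~\ref{thm:zinkcovariant}, we find the following.'' Your additional work---matching the essential image with the ``formal height $\geq 2$'' condition and computing $P^t/Q^t \cong \Hom_R(Q/I_RP,R)$ explicitly from the definition of $Q^t$---fills in details the paper leaves to the reader.
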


\begin{rmk}
\label{rmk:oneform}
In particular, the $p$-divisible group associated to a display in matrix form $B$, with basis $e_1 \ldots e_h$, has a canonical nowhere-vanishing invariant $1$-form $u$ which is the image of $e_h$ in $Q/I_RP$, and a change-of-coordinates as in equation (\ref{eq:changeofbasis}) acts on $u$ by multiplication by $d$.
\end{rmk}

We can use this data to given a presentation for the moduli of $p$-divisible groups of height $\geq 2$.  We recall that the Witt ring functor $\Wt$ is represented by the ring $W = \mb Z[a_0,a_1,a_2,\ldots]$.
\begin{prop}
Displays in matrix form of height $h$ and dimension $(h-1)$ are represented by the ring
\[
A = \mb Z[(\beta_n)_{ij},det(\beta)^{-1}] \cong W^{\otimes h^2}[det(\beta)^{-1}]
\]
The indices range over $n \in \mb N$, $1 \leq i,j \leq h$.  The element $det(\beta)$ is the determinant of the matrix $((\beta_0)_{ij})$.

Isomorphisms between displays are represented by the ring
\[
\Gamma = A[(\phi_n)_{ij}, det(\phi)^{-1}] \cong A \otimes W^{\otimes h^2}[det(\phi)^{-1}]
\]
The indices range over $n \in \mb N$, $1 \leq i,j \leq h$, with the convention that $(\phi_0)_{ij}$ is zero if $1 \leq i \leq (h-1)$, $j=h$.  The element $det(\phi)$ is the determinant of the matrix $((\phi_0)_{ij})$.

The ideal $J = (p,(\beta_0)_{hh})$ of $A$ is invariant.  A display represented by $A \to R$ for $R$ a formal $\mb Z_p$-algebra is nilpotent if and only if it factors through the completion of $A$ at this ideal.
\end{prop}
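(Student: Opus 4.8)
The plan is to treat the three assertions in turn, reducing each to the single matrix entry $(\beta_0)_{hh}$, which is forced on us because the dimension is $h-1$. First I would unwind the nilpotence condition in this case: with $d = h-1$ the ``lower-right corner'' matrix $\overline B$ of the definition is $1 \times 1$, namely the image of $(\beta_0)_{hh}$ under $\Wt(R) \to R \to R/(p)$. Since $f$ on $R/(p)$ is the $p$th power map, the product $f^n\overline B \cdots f\overline B \cdot \overline B$ is simply $\overline B^{\,1 + p + \cdots + p^n}$, and the exponents $1+p+\cdots+p^n$ are unbounded; hence this vanishes for some $n$ if and only if $\overline B$ is nilpotent in $R/(p)$. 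Thus the display is nilpotent precisely when the image of $(\beta_0)_{hh}$ in $R/(p)$ is nilpotent.

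Next I would prove that $J = (p,(\beta_0)_{hh})$ is invariant by computing how $(\beta_0)_{hh}$ transforms under the change-of-coordinates formula (\ref{eq:changeofbasis}). Applying the ghost map $w_0$ (a ring homomorphism, hence commuting with matrix inversion) and reducing modulo $p$, I would use that $w_0(fx) \equiv w_0(x)^p$, that $w_0(vx) = 0$, and that $w_0$ of the lower-left factor $p\cdot fc$ vanishes mod $p$; by the stated convention on $\Gamma$ the reduction $w_0(\phi) \bmod p$ is block lower triangular, with diagonal blocks $a_0 = w_0(a)$ and the $1\times 1$ block $d_0 = w_0(d)$, so both the left factor of (\ref{eq:changeofbasis}) and $w_0(\phi)^{-1}$ become block triangular after reduction. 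Reading off the lower-right $1\times 1$ entry of the product then gives $(\beta'_0)_{hh} \equiv d_0^{\,p-1}\,(\beta_0)_{hh} \pmod p$, where $d_0$ is a unit: triangularity forces $\det(\phi_0) = \det(a_0)\,d_0$, which is inverted in $\Gamma$, and a product is a unit only if each factor is. Hence $\eta_R((\beta_0)_{hh}) \in (p,(\beta_0)_{hh})\cdot\Gamma$, and applying the same to $\phi^{-1}$ gives the reverse inclusion, so $\eta_L(J)\Gamma = \eta_R(J)\Gamma$ and $J$ is invariant. Conceptually this is the content of Remark~\ref{rmk:oneform}: $(\beta_0)_{hh} \bmod p$ is the structure constant of the $p$-semilinear Frobenius on the one-dimensional space $Q/((p)+I_RP)$, whose basis vector scales by $d$, so the constant scales by the unit $d_0^{\,p-1}$.

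Finally, for the completion statement I would write the formal $\mb Z_p$-algebra as $R = \lim R_i$ with each $R_i$ discrete and $p$ nilpotent. A map $A \to R$ factors through the completion $\lim_n A/J^n$ exactly when each composite $A \to R_i$ kills some power $J^N$, i.e.\ when the ideal $JR_i$ is nilpotent; as $J$ is finitely generated this holds iff each generator is nilpotent in $R_i$. The generator $p$ is automatically nilpotent in $R_i$, so the condition reduces to nilpotence of the image of $(\beta_0)_{hh}$ in $R_i$, and since $(p)$ is nilpotent there this is equivalent to nilpotence of its image in $R_i/(p)$. By the first paragraph this is exactly nilpotence of the display over each $R_i$, which by definition is what nilpotence over the formal $\mb Z_p$-algebra $R$ means. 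The main obstacle is the bookkeeping in the second paragraph: getting the block-triangular reductions and the exponent $p-1$ right while keeping track of the orientation of the two unit maps; the first and third paragraphs are then routine once the $1\times 1$ reduction is in hand.
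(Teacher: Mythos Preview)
Your proposal is correct and follows essentially the same approach as the paper: both reduce the nilpotence condition to the single entry $(\beta_0)_{hh}$ via the $1\times 1$ lower-right block, both obtain invariance of $J$ from the change-of-coordinates formula (the paper just says ``a unit times itself'' where you compute the unit explicitly as $d_0^{\,p-1}$), and both deduce the completion statement from nilpotence of $(\beta_0)_{hh}$ in the discrete quotients. Your treatment is more careful about the block-triangular reductions and the passage through $R = \varprojlim R_i$, but the ideas are identical; the only thing you omit that the paper states is the one-line justification of why $A$ and $\Gamma$ represent the claimed functors, which follows immediately from $W$ representing the Witt functor.
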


\begin{proof}
The ring $W^{\otimes h^2}$ represents the functor
\[
R \mapsto \{h \times h\text{ matrices with entries in }\Wt(R)\},
\]
and so the ring $A$ represents $h \times h$ invertible matrices $(\beta_{ij}) = B$ with entries in the Witt ring.  Similarly, the ring $\Gamma$ represents pairs of a display in matrix form and an isomorphism to a second display in matrix form, according to the change-of-coordinates formula (\ref{eq:changeofbasis}).

The change-of-coordinates formula, mod $I_R$, takes $\beta_{hh}$ to a unit times itself, so the ideal $(p,(\beta_0)_{hh})$ is invariant.

Suppose $R$ is a $\mb Z/p^k$-algebra and $A \to R$ represents a matrix $B$, which we view as the matrix form of the display.  The display is nilpotent as in section~\ref{sec:defs} if and only if
\[
(\beta_0)_{hh}^{p^n} \cdots (\beta_0)_{hh}^{p} \cdot (\beta_0)_{hh}
\]
is zero in $R/(p)$ for some $n$.  This is equivalent to $(\beta_0)_{hh}$ being nilpotent in $R$, and so the display is then nilpotent over $R$ if and only if the map $A \to R$ factors through a continuous map $\comp{A} \to R$.  The corresponding statement for formal $\mb Z_p$-algebras follows.
\end{proof}

\begin{cor}
The pair $(A,\Gamma)$ forms a Hopf algebroid, and the completion $(\comp A,\comp \Gamma)$ at the invariant ideal $J$ has an associated stack isomorphic to the moduli of $p$-divisible groups of height $h$, dimension $1$, and formal height $\geq 2$.
\end{cor}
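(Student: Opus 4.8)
The plan is to read off the Hopf algebroid structure from representability and then to identify the associated stack by exhibiting $\Spec(\comp A)$ as a flat presentation of the moduli stack. First I would establish that $(A,\Gamma)$ is a Hopf algebroid. By the preceding proposition, $\Spec(A)$ represents displays in matrix form of height $h$ and dimension $(h-1)$, while $\Spec(\Gamma)$ represents pairs consisting of such a display together with an isomorphism, via (\ref{eq:changeofbasis}), to a second display in matrix form. These assemble into a functor from commutative rings to groupoids, with objects the displays in matrix form and morphisms the isomorphisms. Each structure map of this groupoid-valued functor---assigning to a morphism its source and target, to an object its identity, to a composable pair (represented by $\Gamma \otimes_A \Gamma$) its composite, and to a morphism its inverse---is a natural transformation of representable functors, hence by Yoneda is induced by a ring homomorphism. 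These homomorphisms are the left and right units $A \to \Gamma$, the counit $\Gamma \to A$, the comultiplication $\Gamma \to \Gamma \otimes_A \Gamma$, and the antipode $\Gamma \to \Gamma$, and the cogroupoid identities they satisfy are dual to the groupoid axioms. Since the proposition shows $J$ is an invariant ideal, these structure maps pass to the $J$-adic completions, yielding the Hopf algebroid $(\comp A,\comp\Gamma)$.

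Next I would set up the comparison with the moduli stack. Let $\mathcal M$ denote the moduli stack of $p$-divisible groups of height $h$, dimension $1$, and formal height $\geq 2$, fibered in groupoids over formal $\mb Z_p$-algebras. By the proposition $\comp A$ represents \emph{nilpotent} displays in matrix form, and by Corollary~\ref{thm:zinkcontravariant} each such display has an associated $p$-divisible group of the required type, so there is a morphism $\Spec(\comp A) \to \mathcal M$. Because the equivalence of Corollary~\ref{thm:zinkcontravariant} is functorial, isomorphisms of displays correspond to isomorphisms of the associated $p$-divisible groups, so this morphism descends to a map of stacks $\Phi\co \mathcal M_{(\comp A,\comp\Gamma)} \to \mathcal M$. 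The equivalence is contravariant, but a groupoid and its opposite present the same stack, so this causes no difficulty beyond matching the composition law, which is exactly the content of (\ref{eq:changeofbasis}).

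I would then show $\Phi$ is an equivalence by verifying that $\Spec(\comp A) \to \mathcal M$ is a flat presentation inducing the groupoid $(\comp A,\comp\Gamma)$. The units $\comp A \to \comp\Gamma$ are flat: $\Gamma$ is obtained from $A$ by adjoining the entries $(\phi_n)_{ij}$ and inverting $\det(\phi)$, so it is a localization of a polynomial algebra over $A$, and I would check that flatness persists under the $J$-adic completion in the $A$-direction, making $(\comp A,\comp\Gamma)$ a flat Hopf algebroid that presents $\mathcal M_{(\comp A,\comp\Gamma)}$. The morphism is a cover: given any $p$-divisible group of the required type over a formal $\mb Z_p$-algebra $R$, Corollary~\ref{thm:zinkcontravariant} produces a nilpotent display, and as in section~\ref{sec:defs} this display admits a matrix form Zariski-locally on $\Spec(R)$, so the $p$-divisible group lies in the image after a Zariski (hence fpqc) cover. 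For the fiber product, $\Spec(\comp A) \times_{\mathcal M} \Spec(\comp A)$ represents triples $(B,B',\psi)$ with $\psi$ an isomorphism of the associated $p$-divisible groups; by the equivalence of categories these $\psi$ are precisely the isomorphisms of displays $B \to B'$, which is the data classified by $\comp\Gamma$ with $B'$ determined by (\ref{eq:changeofbasis}). Thus $\Spec(\comp A) \times_{\mathcal M} \Spec(\comp A) \cong \Spec(\comp\Gamma)$ compatibly with the projections and groupoid structure, and therefore $\Phi$ is an equivalence.

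I expect the main obstacle to be the comparison between the rigidified objects parametrized by $\comp A$---displays equipped with a choice of basis putting $Q$ in standard form---and the intrinsic objects of $\mathcal M$, namely $p$-divisible groups up to isomorphism. Concretely, the delicate points are verifying flatness of the units after completion, so that the Hopf algebroid presents a stack at all, and carrying out the fiber-product identification while correctly accounting for the contravariance of Serre duality and for the fact that the target display $B'$ is not free but is determined by the change-of-coordinates formula.
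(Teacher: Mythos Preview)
Your proposal is correct and follows essentially the same route as the paper: both read off the Hopf algebroid structure from representability via Yoneda, use Corollary~\ref{thm:zinkcontravariant} to produce the map $\Spf(\comp A)\to\mathcal M$ and to identify the fiber product $\Spf(\comp A)\times_{\mathcal M}\Spf(\comp A)$ with $\Spf(\comp\Gamma)$, and obtain essential surjectivity from the Zariski-local existence of a matrix form for a display.

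The one divergence is your framing in terms of a \emph{flat presentation}, which leads you to flag flatness of the completed units $\comp A\to\comp\Gamma$ as an obstacle. The paper sidesteps this: it argues directly that the map of groupoid-valued functors is fully faithful (this is exactly the fiber-product identification, using that Corollary~\ref{thm:zinkcontravariant} is an equivalence of categories) and that the induced map of stacks is essentially surjective (local matrix forms). Neither step requires flatness of the units, so your flagged obstacle is real in general but irrelevant here; once you drop it, your argument and the paper's coincide.
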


\begin{proof}
The existence of a Hopf algebroid structure on $(A,\Gamma)$ is a formal consequence of the fact that this pair represents a functor from rings to groupoids.

Let ${\cal M}_p(h)$ be the moduli functor of $p$-divisible groups of height $h$ and dimension $1$.  The universal nilpotent display on $\comp{A}$ gives rise to a natural transformation of functors $\Spf(\comp A) \to {\cal M}_p(h)$, and Theorem~\ref{thm:zinkcontravariant} implies that the $2$-categorical pullback $\Spf(\comp A) \times_{{\cal M}} \Spf(\comp A)$ is $\Spf(\comp \Gamma)$.

The resulting natural transformation of groupoid valued functors from the pair $(\Spf(\comp A),\Spf(\comp \Gamma))$ to ${\cal M}_p(h)$ is fully faithful by Theorem~\ref{thm:zinkcontravariant}; it remains to show that the map from the associated stack is essentially surjective.  Given a $p$-divisible group $\mb G$ on $\mb Z/{p^k}$-scheme $X$ of height $h$, dimension $1$, and formal height $\geq 2$, there exists an open cover of $X$ by affine coordinate charts $\Spec(R_i) \to X$ and factorizations $\Spec(R_i) \to \Spf(\comp A) \to {\cal M}_p(h)$.  It follows that $(\comp A, \comp \Gamma)$ gives a presentation of the moduli as desired.
\end{proof}

\begin{rmk}
The Hopf algebroid described is unlikely to be the best possible.  Zink's equivalence of categories shows that locally in the Zariski topology, a general $p$-divisible group can be described in this general matrix form; it is possible that there are more canonical matrix forms locally in the flat topology.

For example, if $h=2$ then a general matrix form
\[
\begin{bmatrix}
\alpha & \beta \\ \gamma & \delta
\end{bmatrix}
\]
(with $\delta$ nilpotent) can be canonically reduced to the form
\[
\begin{bmatrix}
0 & 1 \\ \gamma' & \delta'
\end{bmatrix},
\]
and by adjoining elements to $R$ to obtain a solution of $f^2 t = t \gamma'$ we obtain a faithfully flat extension in which the matrix can canonically be reduced to the form
\[
\begin{bmatrix}
0 & 1 \\ 1 & \delta''
\end{bmatrix}.
\]
The existence of canonical forms at higher heights, as well as more explicit determination of the associated Hopf algebroids, merits further study.
\end{rmk}

\section{Deformation theory}
\label{sec:deform}

In this section we briefly study the deformations of displays in matrix form.  We note that \cite{zink} has already fully interpreted the deformation theory of displays in terms of the deformations of the Hodge structure $Q/I_R P$.  The approach there is specifically in terms of fixing deformations of $F$ and $V^{-1}$ and classifying possible deformations of the ``Hodge structure'' $Q$; for calculational reasons we will instead fix the deformation of $Q$ and study possible deformations of the operators.

Let $\GL_h \subset \mb A^{h^2}_{\mb Z_p}$ be the group scheme of $h \times h$ invertible matrices.  There is a projection map $\Spec(A) \to \GL_h$ classifying the map that sends a display represented by a matrix $B \in \GL_h(\Wt(R))$ to the matrix $w_0(B)$.  Let $p\co \GL_h \to \mb P^{h-1}$ be the projection map sending a matrix $(\beta_{ij})$ to the point with homogeneous coordinates $[\beta_{1h}:\beta_{2h}:\ldots:\beta_{hh}]$.

\begin{thm}
Let $k$ be a field, and $\Spec(k) \to \Spec(A) \subset \Spec(W)^{h^2}$ be a point that defines a nilpotent display over a field $k$ with matrix form $B$.  Then the composite map $\Spec(A) \to \GL_h \overto \mb P^{h-1}$ identifies the set of isomorphism classes of lifts of this display to $k[\epsilon]/(\epsilon^2)$ with the tangent space of $\mb P^{h-1}_k$ at $\Spec(k)$.
\end{thm}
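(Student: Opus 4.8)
The plan is to translate the deformation problem into linear algebra over $\Wt(k[\epsilon]/(\epsilon^2))$ and then exploit the very special structure of the kernel $K = \ker(\Wt(k[\epsilon]/(\epsilon^2)) \to \Wt(k))$; throughout, nilpotence forces $k$ to have characteristic $p$. Choosing a basis adapted to $Q$ as in section~\ref{sec:defs}, a lift of the display to $k[\epsilon]/(\epsilon^2)$ is the same as a lift $\widetilde B$ of the matrix form $B \in \GL_h(\Wt(k))$ to $\GL_h(\Wt(k[\epsilon]/(\epsilon^2)))$ (every such lift is automatically invertible and nilpotent), and two lifts are isomorphic as deformations precisely when related by the change-of-coordinates formula (\ref{eq:changeofbasis}) with $\phi$ reducing to the identity. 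The lifts form a torsor under $M_h(K)$, so writing $\widetilde B = B + \delta$ identifies $T_B\Spec(A)$ with $M_h(K)$; under the composite map the image in $T\mb P^{h-1}$ of such a $\delta$ is $\lambda(\delta)\,e_h \bmod L$, where $L \subset k^h$ is the line spanned by the last column of $w_0(B)$ and $\lambda\co K \to k$ is the leading coordinate (applied entrywise). I must show that the automorphism directions are exactly the kernel of this evidently surjective map.

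First I would record the structure of $K$. Since $\mathrm{char}\,k = p$ and $\epsilon^2 = 0$, every nonlinear monomial in the Witt addition polynomials has total degree $\geq 2$ in the entries and so is divisible by $\epsilon^2$; thus $K \cong \prod_{n\geq 0} k$ additively via the Witt coordinates. Moreover $f$ acts as $0$ on $K$ (it is the $p$-th power map on coordinates, and $\epsilon^p = 0$), $v$ shifts the grading, $p = fv$ annihilates $K$, and the projection formula $v(z)\kappa = v(z\,f\kappa)$ forces the $\Wt(k)$-action to factor through $w_0$, with $c \in k$ acting on the $n$-th level by $c^{p^n}$. Feeding $f|_K = 0$ into (\ref{eq:changeofbasis}), the left-hand factor collapses modulo $K^2$ to $\left[\begin{smallmatrix} I & b \\ 0 & 1\end{smallmatrix}\right]$, so a first-order automorphism contributes the trivial direction
\[
\Delta B = \left[\begin{array}{c|c} 0 & b \\\hline 0 & 0\end{array}\right] B - B\,(\phi - I),
\]
where $\phi - I$ is an arbitrary element of $M_h(K)$ subject only to its top-right block lying in $vK$.

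The decisive input is nilpotence: over a field the condition of section~\ref{sec:defs} forces $w_0(B)_{hh} = 0$. Decomposing by the Verschiebung filtration $\delta = (\delta^{(n)})$, the level-$n$ part of $\Delta B$ is $\widetilde b^{(n)} \rho_n - \bar B^{\langle n\rangle}\Xi^{(n)}$, where $\bar B^{\langle n\rangle}$ is the entrywise $p^n$-power of $w_0(B)$ (hence invertible), $\rho_n$ its last row, and $\widetilde b^{(n)} = \left[\begin{smallmatrix} b^{(n)} \\ 0\end{smallmatrix}\right]$. Because $w_0(B)_{hh} = 0$, the rank-one term $\widetilde b^{(n)}\rho_n$ has vanishing last column, so at level $0$ the automorphism directions are exactly the matrices whose last column lies in $L$; this simultaneously shows the map descends to isomorphism classes and that the level-$0$ quotient is $k^h/L = T_{pt}\mb P^{h-1}$. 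It remains to see that every higher level is filled: the last column of the level-$n$ direction is $\bar B^{\langle n\rangle}\left[\begin{smallmatrix} b^{(n-1)} \\ e^{(n)}\end{smallmatrix}\right]$, which ranges over all of $k^h$ once $b^{(n-1)}$ and the free scalar $e^{(n)}$ vary, while the first $h-1$ columns are adjusted freely by $\alpha^{(n)}, c^{(n)}$.

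The main obstacle is precisely this last step: the parameter $b$ couples adjacent Verschiebung levels through the $vb$ block, so one cannot treat the levels independently, and the argument hinges on a triangular induction verifying that $b^{(n-1)}$ enters level $n-1$ only through its (absorbable) first $h-1$ columns and never its last column, so that the choices at successive levels do not interfere. Once this triangular structure is in place, the identification $M_h(K)/\{\text{automorphism directions}\} \xrightarrow{\ \sim\ } k^h/L$ follows, and it is by construction induced by the composite map of the statement.
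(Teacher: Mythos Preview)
Your argument is correct and follows the same line as the paper's: parametrize lifts by $M_h(K)$ with $K = \Wt(\epsilon k)$, collapse the left factor of the change-of-basis formula using $f|_K = 0$, and compute the quotient by automorphism directions. The only real difference is organizational. After reaching the formula $\Delta B = \left[\begin{smallmatrix}0&b\\0&0\end{smallmatrix}\right]B - B(\phi-I)$, the paper factors $B$ out on the left, writing the automorphism direction as
\[
B\left(-\begin{bmatrix} a & 0 \\ c & d\end{bmatrix} - \begin{bmatrix} 0 & vb \\ 0 & 0\end{bmatrix} + B^{-1}\begin{bmatrix} b\gamma & 0 \\ 0 & 0\end{bmatrix}\right);
\]
since left multiplication by the unit $B \in \GL_h(\Wt(k)) \subset \GL_h(\Wt(k[\epsilon]))$ is bijective on $M_h(K)$, one reads off at once that the subspace is exactly $B\cdot\{N \in M_h(K): N_{ih}\in vK \text{ for } i<h\}$, i.e.\ all matrices whose last column reduces mod $I_R$ to a multiple of the last column of $B$. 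This completely sidesteps the level-by-level coupling through $vb$ that you flag as the main obstacle: the triangular induction you describe is a genuine feature of your coordinates, but it evaporates once $B$ is absorbed on the left rather than expanded as $\bar B^{\langle n\rangle}$ at each Witt level. Your route is valid; the paper's is just shorter.
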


\begin{proof}
Suppose we are given the matrix form $B$ of a display over $k$, and write in block form
\[
B = 
\left[\begin{array}{c|c}
  \alpha & \beta \\
\hline
  \gamma & v\delta
\end{array}\right] \in \GL_h(\Wt(k)).
\]
(Nilpotence of the display forces the final entry to reduce to zero in $k$.)  Given any lift of the display on $k$ to a display on $k[\epsilon]/\epsilon^2$, Nakayama's lemma implies that any lift of the basis of the display gives a basis of the lift, whose matrix form is a lift of the matrix form over $k$.

Lifts of the matrix form $B$ to $k[\epsilon]/\epsilon^2$ are precisely of the form $B + s$ where $s$ is a matrix in $\Wt(\epsilon k)$, as any such element is automatically invertible.  Applying the change-of-basis formula (\ref{eq:changeofbasis}), we find that the lifts isomorphic to this one are of the form
\[
\left(I + 
\left[\begin{array}{c|c}
  fa & b \\
\hline
  p \cdot fc & fd
\end{array}\right]\right)
(B + s)
\left(I + \left[\begin{array}{c|c}
  a & vb \\
\hline
  c & d
\end{array}\right]\right)^{-1}
\]
for $a$, $b$, $c$, $d$ matrices in $\Wt(\epsilon k)$.  The ideal $\Wt(\epsilon k)$ is square-zero and annihilated by $f$, so this reduces to
\[
B + \left(s +
\left[\begin{array}{c|c}
  0 & b \\
\hline
  0 & 0
\end{array}\right]B
- B \left[\begin{array}{c|c}
  a & vb \\
\hline
  c & d
\end{array}\right]\right).
\]
The space of all isomorphism classes of lifts is therefore the quotient of $M_h(\Wt(\epsilon k))$ by the subspace generated by elements of the form
\[
B \left(-\left[\begin{array}{c|c}
  a & 0 \\
\hline
  c & d
\end{array}\right] -
\left[\begin{array}{c|c}
  0 & vb\\
\hline
  0 & 0
\end{array}\right] +
B^{-1} \left[\begin{array}{c|c}
  b \gamma & 0\\
\hline
  0 & 0
\end{array}\right]\right).
\]
As $B$ is invertible, this subspace consists of the $h \times h$ matrices in $\Wt(\epsilon k)$ whose final column is congruent to a multiple of the final column of $B$ (mod $I_R$).

However, this coincides with the kernel of the (surjective) map on relative tangent spaces $\Spec(A) \to \mb P^{h-1}$ over $\mb Z_p$ at $\Spec(k)$.
\end{proof}

\begin{cor}
\label{cor:niletale}
Suppose that we are given a display $(P,Q,F,V^{-1})$ of height $h$ and dimension $(h-1)$ over a formal $\mb Z_p$-algebra $R$ in matrix form $(\beta_{ij})$.  Then $R$ gives a universal deformation of the associated $1$-dimensional $p$-divisible group at all points if and only if the map $\Spec(R) \to \mb P^{h-1}_{\mb Z_p}$ given by 
\[
[w_0 (\beta_{1h}):w_0(\beta_{2h}):\cdots:w_0 (\beta_{hh})]
\]
is \'etale.
\end{cor}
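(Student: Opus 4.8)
The plan is to verify the equivalence one point at a time, reducing each condition to a statement about tangent spaces to which the preceding theorem applies. Fix a point $x\in\Spec(R)$ with residue field $k$, let $\mb G_x$ be the restricted $p$-divisible group over $k$, and write $u\co\Spec(R)\overto{\cal M}_p(h)$ for the classifying map to the moduli of $p$-divisible groups and $\pi\co\Spec(R)\overto\mb P^{h-1}$ for the map of the statement. By Grothendieck--Messing theory (equivalently, by the display deformation theory of \cite{zink}), deformations of $\mb G_x$ are unobstructed: ${\cal M}_p(h)$ is formally smooth over $\mb Z_p$ of relative dimension $h-1$, and the tangent space to the deformation functor of $\mb G_x$ is the set of isomorphism classes of lifts of the display to $k[\epsilon]/(\epsilon^2)$. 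With this in hand, ``$R$ gives a universal deformation at $x$'' is exactly the assertion that $u$ is \'etale at $x$; and since the target is smooth over $\mb Z_p$ of relative dimension $h-1$, $u$ is \'etale at $x$ if and only if $R$ is smooth over $\mb Z_p$ of relative dimension $h-1$ at $x$ and $du_x$ is an isomorphism on tangent spaces.

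The same criterion applies verbatim to $\pi$: as $\mb P^{h-1}$ is smooth over $\mb Z_p$ of relative dimension $h-1$, the map $\pi$ is \'etale at $x$ if and only if $R$ is smooth over $\mb Z_p$ of relative dimension $h-1$ at $x$ and $d\pi_x$ is an isomorphism. The smoothness hypothesis on $R$ is identical in the two characterizations, so at the point $x$ the corollary reduces to the single claim that $du_x$ is an isomorphism if and only if $d\pi_x$ is.

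This is where the preceding theorem is used. Recording the matrix form as a map $\beta\co\Spec(R)\overto\Spec(A)$, both $u$ and $\pi$ factor through $\beta$. The theorem computes the differential of $\Spec(A)\overto\mb P^{h-1}$ at $\beta(x)$ and identifies its kernel with the subspace of lifts isomorphic to the given display; but this subspace is, by definition, the kernel of the differential of the map $\Spec(A)\overto{\cal M}_p(h)$ classifying the $p$-divisible group. Pulling both back along $d\beta_x$, the kernels of $du_x$ and of $d\pi_x$ coincide inside $T_x\Spec(R)$. Since both target tangent spaces have dimension $h-1$, a linear map from $T_x\Spec(R)$ into either is an isomorphism precisely when its kernel is zero and $\dim_k T_x\Spec(R)=h-1$; as the two kernels agree, $du_x$ is an isomorphism if and only if $d\pi_x$ is. Combining the three steps gives the equivalence at $x$, and letting $x$ range over $\Spec(R)$ yields the corollary.

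The step I expect to give the most trouble is the first, and specifically the careful interpretation of universality. One should note that $\pi$ does \emph{not} descend to a morphism out of ${\cal M}_p(h)$: by the change-of-coordinates formula (\ref{eq:changeofbasis}) an isomorphism of displays acts on the final column, up to a scalar, by left multiplication by the block matrix appearing on the left in (\ref{eq:changeofbasis}), which need not preserve the corresponding point of $\mb P^{h-1}$. Hence there is no global \'etale map ${\cal M}_p(h)\overto\mb P^{h-1}$ to which a cancellation argument could be applied, and the comparison of the two differentials must instead be made pointwise, exactly as above, using the kernel computation of the theorem at the chosen point. The other input that must be handled with care is unobstructedness of the deformations, since it is precisely this that licenses detecting universality on tangent spaces alone.
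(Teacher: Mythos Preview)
Your argument is correct and follows essentially the same route as the paper: reduce pointwise, characterize both ``universal deformation at $x$'' and ``\'etale at $x$'' as smoothness of $R$ over $\mb Z_p$ together with a tangent-space isomorphism, and then invoke the preceding theorem to match the two tangent-space conditions. The only cosmetic difference is that the paper uses the full conclusion of the theorem---that the deformation tangent space is \emph{identified} with $T_{\pi(x)}\mb P^{h-1}_k$ via the map $\Spec(A)\to\mb P^{h-1}$---so that $du_x$ and $d\pi_x$ become literally the same map under that identification, whereas you extract only the equality of kernels and then close with a dimension count; both are fine.
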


\begin{proof}
The ring $R$ gives a universal deformation of the associated $p$-divisible group at a residue field $\Spec(k) = x$ if and only if the completed local ring $\comp{R}_x$ is mapped isomorphically to the universal deformation ring of the $p$-divisible group, which is a power series ring $\mb W(k)\llbracket u_1,\ldots,u_{h-1}\rrbracket$.  In particular, $R$ gives a universal deformation at $x$ if and only if:
\begin{itemize}
\item $R$ is smooth over $\mb Z_p$ at $x$, and
\item the relative tangent space of $R$ over $\mb Z_p$ at $x$, which is the set of lifts $\Spec(k[\epsilon]/\epsilon^2) \to \Spec(R)$, maps isomorphically to the set of lifts of the display to $k[\epsilon]/\epsilon^2$.
\end{itemize}

However, because $\mb P^{h-1}_{\mb Z_p}$ is smooth over $\Spf(\mb Z_p)$, the map $\Spf(R) \to \mb P^{h-1}_{\mb Z_p}$ is \'etale at $\Spec(k)$ if and only if:
\begin{itemize}
\item $R$ is smooth over $\mb Z_p$ at $x$, and
\item the relative tangent space of $R$ over $\mb Z_p$ at $x$ maps isomorphically to the tangent space of $\mb P^{h-1}_{k}$ at $x$.
\end{itemize}
By the previous theorem, these conditions coincide.
\end{proof}

\section{Associated spectra}
\label{sec:spectra}

We recall a statement of Lurie's theorem (as yet unpublished) from \cite{goerss}.  We write ${\cal M}_p(h)$ for the moduli of $p$-divisible groups of height $h$ and dimension $1$, ${\cal M}_{FG}$ for the moduli of $1$-dimensional formal groups, and ${\cal M}_p(h) \to {\cal M}_{FG}$ for the canonical map representing completion at the identity.

\begin{thm}[Lurie]
\label{thm:lurie}
Let ${\cal X}$ be an algebraic stack, formal over $\mb Z_p$,     equipped with a morphism
\[
{\cal X} \to {\cal M}_p(h)
\]
classifying a $p$-divisible group $\mb G$.  Suppose that at any point
$x \in {\cal X}$, the complete local ring of ${\cal X}$ at $x$ is
mapped isomorphically to the universal deformation ring of the $p$-divisible group at $x$.  Then the composite realization problem
\[
{\cal X} \to {\cal M}_p(n) \to {\cal M}_{FG}
\]
has a canonical solution; that is, there is a sheaf of $E_\infty$ even
weakly periodic $E$ on the \'etale site of ${\cal X}$ with $E_0$ locally isomorphic to the structure sheaf and the associated formal group isomorphic to the formal group $\mb G^{for}$.  The space of all solutions is connected and has a preferred basepoint.
\end{thm}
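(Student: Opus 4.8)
The statement is a form of the Goerss--Hopkins--Miller theorem for $p$-divisible groups, so the plan is to reduce the global realization problem to an \'etale-local one and then rigidify a presheaf of homotopy-commutative ring spectra into a sheaf of $E_\infty$-rings by obstruction theory. First I would note that, because $\mb G^{for}$ is of finite height over the regular local rings that appear as completed local rings of ${\cal X}$, it is Landweber exact; the Landweber exact functor theorem then produces, for each \'etale $\Spec(R) \to {\cal X}$, an even weakly periodic homology theory with $E_0 = R$ whose associated formal group is $\mb G^{for}$. These assemble into a presheaf of even-periodic homotopy-commutative ring spectra on the \'etale site, and the whole content of the theorem is that this presheaf admits a canonical lift, as a sheaf, to $E_\infty$-rings.

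Next I would establish the local model. At a geometric point $x$ with residue field $k$ the hypothesis identifies $\comp{\cal O}_{{\cal X},x}$ with the Lubin--Tate ring $\Wt(k)\pow{u_1,\ldots,u_{h-1}}$, and the Goerss--Hopkins--Miller theorem \cite{goersshopkins} realizes the associated Morava $E$-theory as an $E_\infty$-ring $E_x$ carrying a functorial action of the automorphism group of the pair $(\mb G, x)$. Refining the resulting formal neighbourhoods to an actual \'etale cover of ${\cal X}$ yields the realization \'etale-locally, together with the change-of-coordinate data on overlaps.

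The gluing, the sheaf property, and uniqueness are all governed by Goerss--Hopkins obstruction theory \cite{goerss}: the moduli space of $E_\infty$-realizations of a fixed Landweber-exact $E_*$ is the homotopy limit of a tower whose successive obstructions to existence, and to contractibility of the space of lifts, lie in the topological Andr\'e--Quillen cohomology groups $\TAQ^{s}$ of the structure sheaf with coefficients in suspensions of the sheaf of invariant differentials. The plan is to show that the universal deformation hypothesis forces the relative topological cotangent complex to be concentrated in degree zero, so that these $\TAQ$ groups vanish above a line; existence of the sheaf, \'etale descent, connectivity of the solution space, and the preferred basepoint given by the canonical construction then all drop out of the collapse of the descent spectral sequence.

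The hard part is exactly this vanishing, because the hypothesis is a first-order condition on the \emph{algebraic} deformation theory of $\mb G$, whereas the obstructions live in the full \emph{topological} Andr\'e--Quillen cohomology of Morava $E$-theory. Bridging the two requires a comparison theorem identifying the topological cotangent complex of the realization with the algebraic cotangent complex of the universal deformation, so that the universal deformation criterion---detected in the display setting of Corollary~\ref{cor:niletale} by \'etaleness of the period map $\Spec(R) \to \mb P^{h-1}$---controls every higher obstruction and not merely the tangent space. In Lurie's framework this is supplied by the representability and formal smoothness of the \emph{spectral} deformation functor of $\mb G$: once the spectral deformation space is shown to be spectrally \'etale over the topological refinement of ${\cal M}_{FG}$, the algebraic \'etaleness lifts to the spectral level and the obstruction groups vanish. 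This comparison, rather than any single computation, is the crux of the theorem.
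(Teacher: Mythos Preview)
The paper does not prove this statement. Theorem~\ref{thm:lurie} is explicitly attributed to Lurie, introduced with ``We recall a statement of Lurie's theorem (as yet unpublished) from \cite{goerss}'', and is used as a black box throughout section~\ref{sec:spectra}. There is therefore nothing in the paper to compare your proposal against.

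That said, your outline is a reasonable high-level sketch of the Goerss--Hopkins obstruction-theoretic route to such a result, and you correctly identify where the real content lies: the passage from the first-order algebraic hypothesis (an isomorphism of tangent spaces, or equivalently the \'etaleness criterion of Corollary~\ref{cor:niletale}) to the vanishing of \emph{all} higher topological Andr\'e--Quillen obstructions. Your last paragraph is honest about this gap but does not close it; the claim that ``once the spectral deformation space is shown to be spectrally \'etale \ldots\ the algebraic \'etaleness lifts'' is precisely the theorem, not an argument for it. Lurie's actual proof proceeds via spectral algebraic geometry and a representability theorem for the spectral deformation functor of a $p$-divisible group, and the formal smoothness of that functor is what makes the first-order condition sufficient. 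Absent a genuine construction of that spectral moduli and a proof of its formal properties, your proposal remains a plan rather than a proof.
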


We may then combine this result with Corollary~\ref{cor:niletale} to produce $E_\infty$-ring spectra associated to schemes or stacks equipped with an appropriate cover by coordinate charts carrying displays.  Rather than stating in maximal generality, we have the following immediate consequence.

\begin{thm}
\label{thm:main}
Suppose $R$ is a formal $\mb Z_p$-algebra and $B$ is the matrix form of a nilpotent display over $R$, with associated $p$-divisible group $\mb G$.  If the associated map $\Spf(R) \to \mb P^{h-1}_{\mb Z_p}$ is \'etale, then there is an $E_\infty$ even-periodic $E = E(R,B)$ with $E_0 \cong R$, $E_2 \cong Q/I_R P$, and formal group isomorphic to the formal group $\mb G^{for}$.

Given matrix forms $B$, $B'$ of such displays over $R$ and $R'$ respectively, $g\co R \to R'$ a ring map, and $\phi$ a change-of-coordinates from $g^* B$ to $B'$ as in equation (\ref{eq:changeofbasis}), there exists a map of $E_\infty$ ring spectra $E(R,B) \to E(R',B')$ inducing $g$ on $\pi_0$ and lifting the associated map $\mb G^{for} \to (\mb G')^{for}$.  This construction is functorial in $B'$ as an object over $B$.
\end{thm}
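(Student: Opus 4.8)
The plan is to produce $E(R,B)$ by feeding the display into Lurie's theorem, and then to extract the functoriality statement from the uniqueness clause in that theorem. First I would run the matrix form $B$ through Corollary~\ref{thm:zinkcontravariant} to obtain the associated $p$-divisible group $\mb G$ of height $h$ and dimension $1$ over $\Spec(R)$, with classifying map $\Spf(R) \to {\cal M}_p(h)$. Corollary~\ref{cor:niletale} says precisely that the hypothesis of Theorem~\ref{thm:lurie} — that at every point the complete local ring of $\Spf(R)$ maps isomorphically to the universal deformation ring of $\mb G$ — holds if and only if the stated map $\Spf(R) \to \mb P^{h-1}_{\mb Z_p}$ is \'etale. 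Taking ${\cal X} = \Spf(R)$ in Theorem~\ref{thm:lurie} then yields a sheaf of even weakly periodic $E_\infty$-rings on the \'etale site of $\Spf(R)$; evaluating at the identity chart (global sections) produces the spectrum $E = E(R,B)$. Since $\Spf(R)$ is affine, its structure sheaf has global sections $R$, so $E_0 \cong R$, and the associated formal group is $\mb G^{for}$ by construction. The $E_0$-module $E_2$ is the invariant $1$-forms of $\mb G^{for}$, which Corollary~\ref{thm:zinkcontravariant} identifies with $Q/I_R P$.

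The passage from \emph{weakly} periodic to genuinely even-periodic uses Remark~\ref{rmk:oneform}: the image $u$ of $e_h$ in $Q/I_R P \cong E_2$ is a canonical nowhere-vanishing global section, so $E_2$ is free of rank one over $E_0$ on an invertible generator. The corresponding class in $\pi_2 E$ is then invertible, giving a periodicity isomorphism and upgrading $E$ to an even-periodic ring spectrum. Note that for the functoriality statement one does not need the periodicity generator to be preserved, only the underlying map of $E_\infty$-rings, so the fact that $u$ transforms by the unit $d$ under equation~(\ref{eq:changeofbasis}) causes no difficulty.

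For functoriality, the data $(g,\phi)$ assembles into a $2$-commutative triangle over ${\cal M}_p(h)$: the ring map $g$ induces $\Spf(g)\co \Spf(R') \to \Spf(R)$, and under the equivalence of Corollary~\ref{thm:zinkcontravariant} the change-of-coordinates $\phi\co g^* B \to B'$ of equation~(\ref{eq:changeofbasis}) corresponds to an isomorphism of the classified $p$-divisible groups over $\Spec(R')$. This exhibits the classifying map of $B'$ as $2$-isomorphic to $\Spf(g)$ followed by the classifying map of $B$. I would then pull the solution sheaf on $\Spf(R)$ back along $\Spf(g)$; the result solves the realization problem for $(g^* \mb G)^{for} \cong (\mb G')^{for}$ over $\Spf(R')$, so by the connectedness (with preferred basepoint) of the solution space in Theorem~\ref{thm:lurie} it is canonically equivalent to $E(R',B')$. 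Taking global sections then furnishes the required $E_\infty$-map $E(R,B) \to E(R',B')$; it induces $g$ on $\pi_0$ and, on formal groups, the map classified by the triangle, i.e.\ a lift of $\mb G^{for} \to (\mb G')^{for}$.

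The main obstacle is organizing these comparisons into \emph{coherent} functoriality — genuine functoriality in $B'$ as an object over $B$, compatible with composition of the data $(g,\phi)$. Because Theorem~\ref{thm:lurie} is phrased for a single stack and asserts only connectedness of the solution space, connectedness alone identifies solutions up to homotopy but does not by itself assemble a coherent system of composites. The clean resolution is to use the naturality built into the construction — that the sheaf ${\cal O}^{top}$ is a functor of the stack equipped with its universal $p$-divisible group, so that pullback along $\Spf(g)$ is compatible with composition — and to invoke the preferred-basepoint normalization only to identify the pullback with the solution on the target. Making this naturality precise, and checking that it descends through the identifications $\phi$, is the one step that goes beyond routine bookkeeping.
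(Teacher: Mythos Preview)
Your treatment of existence and even-periodicity matches the paper's exactly: apply Corollary~\ref{cor:niletale} to verify the universal-deformation hypothesis of Theorem~\ref{thm:lurie}, take global sections, and invoke Remark~\ref{rmk:oneform} for the trivialization of $E_2$.

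For the functoriality statement, however, the paper takes a shorter route that avoids the coherence issue you flag. Rather than pulling back the solution sheaf and appealing to connectedness of the solution space, the paper observes that the map $\Spf(R') \to \Spf(R)$ is itself \'etale. The reason is that the \'etaleness of $\Spf(R) \to \mb P^{h-1}$ and $\Spf(R') \to \mb P^{h-1}$ is, by Corollary~\ref{cor:niletale}, equivalent to each being a universal deformation of its $p$-divisible group at every point; this is intrinsic to $\mb G$ and $\mb G'$ and does not depend on the matrix presentation. The data $(g,\phi)$ then exhibits $\Spf(R') \to \Spf(R)$ as a map over ${\cal M}_p(h)$ between two objects formally \'etale over it, so by cancellation the map is \'etale. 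Thus $\Spf(R')$ is already an object of the \'etale site of $\Spf(R)$, and the \emph{sheaf} produced by Theorem~\ref{thm:lurie} gives the restriction map $E(R,B) \to E(R',B')$ directly, with functoriality built into the sheaf axioms. Your pullback-plus-uniqueness argument can presumably be made to work, but the paper's observation makes the ``main obstacle'' you identified disappear.
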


\begin{proof}
The existence of $E(R,B)$ follows from Theorem~\ref{thm:lurie} and Corollary~\ref{cor:niletale}.  Given any such map $\Spf(R') \to \Spf(R)$, the maps $\Spf(R') \to \mb P^{h-1}$ and $\Spf(R) \to \mb P^{h-1}$ both being \'etale forces $\Spf(R')$ to be \'etale over $\Spf(R)$, and hence represents an element of the \'etale site.  Lurie's theorem then implies that the map lifts.

We recall from remark \ref{rmk:oneform} that in matrix form there is a nowhere vanishing $1$-form $u$ on the cotangent space $Q/I_R P$ of the formal group $\mb G^{for}$, implying that the tensor powers of the cotangent bundle are all free.  This implies the strictly even-periodic structure on $E$.
\end{proof}

Associated to a display not in matrix form, we would instead obtain a weakly even-periodic ring spectrum whose $2k$'th homotopy group is the $k$'th tensor power of the locally free module $Q/I_RP$ of invariant $1$-forms.

\begin{exam}
Let $h \geq 2$ and $R = \comp{(\mb Z[u_1,\cdots,u_{h-1}])}_{(p,u_1)}$.  Then we have the following display over $R$:
\begin{equation}
\label{eq:matrix}
\begin{bmatrix}
0 & 0 & 0 &  & 0 & 1 \\
1 & 0 & 0 & \cdots & 0 & [u_{h-1}]\\
0 & 1 & 0 &  & 0 & [u_{h-2}]\\
& \vdots &&&  & \vdots \\
0 & 0 & 0 &  & 0 & [u_2]\\
0 & 0 & 0 & \cdots & 1 & [u_1]
\end{bmatrix}
\end{equation}
Here $[x]$ denotes the Teichm\"uller lift of the element $x$.  The associated map $\Spf(R) \to \mb P^{h-1}$ is the map
\[
[1:u_{h-1}:\cdots:u_1],
\]
which is the completion of a coordinate chart of $\mb P^{h-1}$ at the ideal $(p,u_1)$ and is therefore \'etale.  Because this display is given in matrix form, there is a canonical non-vanishing invariant $1$-form $u$ and the resulting spectrum has homotopy groups
\[
\comp{(\mb Z[u_1,\ldots,u_{h-1}])}_{(p,u_1)} [u^{\pm 1}].
\]
Here $|u| = 2$.  This represents a ``partial'' thickening of a Lubin-Tate spectrum of height $h$ to a global object.
\end{exam}

\begin{exam}
Let $S = \comp{\Wt(\mb F_{p^h})[u_1,\cdots,u_{h-1}]}_{(p,u_1)}$, with the same display as given in the previous example.  Let $\zeta$ be a primitive $(p^h-1)$'st root of unity in $\mb F_{p^h}$ with Teichm\"uller lift $[\zeta]$.  Then the group $\mb F_{p^h}^\times = \langle \zeta \rangle$ acts on $S$ with generator $\zeta$ acting by
\[
\zeta \cdot (u_1,\ldots,u_{h-1}) = (\zeta^{1-p} u_1, \zeta^{1-p^2} u_2,\ldots,\zeta^{1-p^{h-1}} u_{h-1}),
\]
and the action of $\zeta$ lifts to an action on the display via the change-of-coordinates matrix
\[
\begin{bmatrix}
[\zeta^{p^{h-1}}] & 0 & & 0 & 0 \\
0 & [\zeta^{p^{h-2}}] & \cdots & 0 & 0\\
& \vdots && \vdots & \\
0 & 0 & \cdots & [\zeta^p] & 0 \\
0 & 0 & & 0 & [\zeta] \\
\end{bmatrix}.
\]
This acts on the invariant $1$-form $u$ by multiplication by $\zeta$.  This gives a well-defined action on the $p$-divisible group associated to the display.

Similarly, there is a Galois automorphism $\sigma$ of $S$ which acts by the Frobenius on $\Wt(\mb F_{p^h})$ and acts trivially on the generators $u_i$.  This automorphism preserves the display, and satisfies the relations $\sigma^h = id$, $\sigma \zeta = \zeta^p \sigma$.  Together these give an action of $G = (\mb F_{p^h}^\times
\rtimes {\rm Gal}_{\mb F_{p^h}/\mb F_p})$ on $\Spf(S)$ which lifts to an action on the associated $p$-divisible group.

Theorem~\ref{thm:main} implies that this lifts to an associated spectrum with an action of $G$.  (More generally, there is an associated sheaf of $E_\infty$-ring spectra on the quotient stack $[\Spf(S)//G]$.) The canonical invariant $1$-form $u$ is acted on by $\zeta$ by left multiplication, and acted on trivially by $\sigma$.  The homotopy fixed point object (which is the global section object of the quotient stack) has homotopy groups
\[
\left(\comp{\Wt(\mb F_{p^h})[u_1,\ldots,u_{h-1}]}_{(p,u_1)} [u^{\pm 1}]\right)^{G} \cong \comp{\mb Z[v_1,\ldots,v_{h-1},v_h^{\pm 1}]}_{(p,u_1)}.
\]
Here $v_i = u^{p^i - 1} u_i$ has degree $2p^i - 2$.  This has the homotopy type of a Johnson-Wilson spectrum completed at the height $\geq 2$ locus.
\end{exam}

\section{The period map}
\label{sec:period}

In section~\ref{sec:deform} we associated to each display in matrix form over $R$ a map $\Spec(R) \to \mb P^{h-1}$.  In this section we will briefly relate this, in a specific choice of coordinates, to the rigid analytic period map constructed by Gross and Hopkins \cite{grosshopkins}.  We first recall the construction of this period map.

Let $k$ be a perfect field of characteristic $p$ carrying a formal group law of finite height $h$, and $K = \Wt(k) \otimes \mb Q$.  Associated to this formal group law there is a universal deformation to a formal group law over the Lubin-Tate ring $R \cong \Wt(k)\pow{u_1,\ldots,u_{h-1}}$.  The formal group law gives rise to a Dieudonn\'e crystal on $R$ of rank $h$, and for a sufficiently large ring $R \subset S \subset K\pow{u_1,\ldots,u_{h-1}}$ the horizontal sections of this crystal on $\Spec(S)$ form a vector space $V$ over $K$ of rank $h$, containing a family of rank $(h-1)$-submodules determined by the Hodge structure.  The Gross-Hopkins period map $\Spec(S) \to \mb P(V)$ sends points of this rigid-analytic extension of Lubin-Tate space to the Hodge structure at that point.

Let $R = \Wt(k)\pow{u_1,\ldots,u_{h-1}}$, equipped with the ring homomorphism $\sigma\co R \to R$ which acts by the Frobenius on $\Wt(k)$ and sends $u_i$ to $u_i^p$; this is a lift of the Frobenius map on $R/p$, and provides a splitting $R \to \Wt(R)$ commuting with the Frobenius.  We write $J$ for the ideal $(u_1, \ldots, u_{h-1})$.  This ring $R$ carries the display of equation~\ref{eq:matrix}. This display is a universal deformation of a $p$-divisible group of height $h$ over the residue field $k$, and so the associated $p$-divisible group on $\Spf(R)$ is a universal deformation of the formal group on $\Spec(k)$.  The map $\Spec(R) \to \mb P^{h-1}$ of section~\ref{sec:deform} is the map $[1:u_{h-1}:\cdots:u_1]$.

To translate this into the (covariant) language of the Gross-Hopkins map, we first convert the display into the dual, covariant, display, which is a free $\Wt(R)$-module $P^t$ with dual basis $e^1, \ldots, e^h$ and Hodge structure $Q^t \subset P^t$ generated by $e^1, \ldots, e^{h-1}$.  (This Hodge structure is determined by the linear functional $\sum a_ie^i \mapsto a_h$.)  A straightforward calculation finds that the matrix of $F^t$ with respect to this basis is
\begin{equation}
\label{eq:frobenius}
\begin{bmatrix}
0 & p & 0 &  & 0 & 0 \\
0 & 0 & p & \cdots & 0 & 0\\
0 & 0 & 0 &  & 0 & 0\\
& \vdots &&&  & \vdots \\
0 & 0 & 0 &  & p & 0\\
0 & 0 & 0 &  & 0 & 1\\
p & p[u_{h-1}] & p[u_{h-2}] & \cdots & p[u_2] & [u_1]
\end{bmatrix}.
\end{equation}

As in \cite{zink}, there is a Dieudonn\'e crystal associated to this display.  The data of such a crystal produces: a module $M = R \otimes_{\Wt(R)} P^t$, a Hodge structure $Q^t/I_R P^t \subset M$, a $\sigma$-semilinear Frobenius map $F\co M \to M$, and a $\sigma$-antisemilinear map $V\co M \to M$ satisfying $FV = VF = p$.  Associated to this data there is a unique connection $\nabla \co M \to M \otimes \Omega_{R/\Wt(k)}$ for which $F$ and $V$ are horizontal.

Let $\Psi$ be the matrix of $F$ in this basis (the reduction mod $I_R$ of equation \ref{eq:frobenius}) and $\overline \Psi$ the image given by sending $u_i$ to $0$.  There exists a deformation of the basis $\{e^i\}$ to a basis of horizontal sections for this connection; i.e., there is a matrix $A \in \GL_h(K\pow{u_1,\ldots,u_{h-1}})$ whose columns are annihilated by $\nabla$ satisfying $A \equiv I$ mod $J$.  The expression in this basis for the linear functional cutting out the Hodge structure is given by the last row of $A$.

As $F$ is horizontal, it takes horizontal sections to horizontal sections, and hence applying the Frobenius to the columns of $A$ gives a linear combination of the combinations of $A$.  This implies that $\Psi A^\sigma = A B$ for some matrix $B$ with coefficients in $K$.  Reducing mod $J$ we find that $B = \overline \Psi$.  Thus, such a matrix must satisfy $A = \Psi A^\sigma \overline \Psi^{-1}$.  (Taking a limit of iterative substitutions recovers $A$ itself.)

As $A \equiv I$ mod $J$, $A^\sigma \equiv I$ mod $J^p$.  Therefore, we find $A \equiv \Psi \overline \Psi^{-1}$ mod $J^p$.  Applying this to equation~\ref{eq:frobenius}, we find
\[
A \equiv 
\begin{bmatrix}
1 & 0 &  & 0 & 0 \\
0 & 1 & \cdots & 0 & 0\\
& \vdots &&  & \vdots \\
0 & 0 &  & 1 & 0\\
u_{h-1} & u_{h-2} & \cdots & u_1 & 1
\end{bmatrix}\text{ mod }J^p.
\]
As a result, the Gross-Hopkins map classifying the Hodge structure is congruent to $[u_{h-1}:\cdots:u_1:1]$ mod $J^p$.  With an appropriate choice of coordinates we can then regard the map defined in section~\ref{sec:deform} as an approximation of the Gross-Hopkins map.

\nocite{*}
\bibliography{displays}

\end{document}